\newtheorem{teo}{Theorem}[section]
\newtheorem{lem}[teo]{Lemma}
\newtheorem{defi}[teo]{Definition}
\newtheorem{rem}[teo]{Remark}
\def\p{{\cal P}}
\def\ga{G_{\cal A}}
\def\ah{{\cal A}_h}
\def\a{{\cal A}}
\begin{document}

\title{\vspace*{0cm}Unitarization of uniformly bounded subgroups in finite von Neumann algebras}

\date{}
\author{Mart\'in Miglioli\footnote{Supported by ANPCyT, Argentina.}}

\maketitle

\abstract{\footnotesize{\noindent }This note will present a new proof of the fact that every uniformly bounded group of invertible elements in a finite von Neumann algebra is similar to a unitary group. The proof involves metric geometric arguments in the non-positively curved space of positive invertible operators of the algebra; in 1974 Vasilescu and Zsido proved  this result using the Ryll-Nardzewsky fixed point theorem.

\setlength{\parindent}{0cm} 

\section{Geometry of the cone of positive operators in a finite algebra}

The metric geometry of the cone of positive invertible operators in a finite von Neumann algebra was studied in \cite{andruchowlarotonda,condelarotonda}. In this subsection we recall some facts from these papers.

Let $\a$ be a von Neumann algebra with a finite (normal, faithful) trace $\tau$. Denote by $\ah$ the set of selfadjoint elements of $\a$, by $\ga$ the group of invertible elements, by $U_{\a}$ the group of unitary operators, and by $\p$ the set of positive invertible operators

$$\p = e^{\ah} = \{ a \in \ga : a > 0 \};$$

$\p$ is an open subset of $\ah$ in the norm topology. Therefore if one regards it as a manifold, its tangent spaces identify with $\ah$ endowed with the uniform norm $\|\cdot\|$.

We make of $\p$ a weak Banach-Finsler manifold by assigning for each $a\in \p$ the following $2$-norm to the tangent space $T_a(\p)\simeq \ah$

$$\|x\|_{a,2}= \|a^{-\frac12}xa^{-\frac12}\|_2,\qquad  \mbox{for } x\in \ah \simeq T_a(\p)$$

where

$$\|x\|_{2}=\tau(x^2)^{\frac12} \qquad \mbox{for } x\in \ah .$$

One obtains a geodesic distance $d_2$ on $\p$ by considering

$$d_2(a,b)=inf\{Lenght(\gamma): \gamma \mbox{ is a piecewise smooth curve joining } a \mbox{ and } b\},$$

where smooth means differentiable in the norm induced topology and the lenght of a curve $\gamma : [0,1] \to \p$ is measured using the norm above:

$$Lenght(\gamma)=\int_0^1 \|\dot{\gamma}(t)\|_{\gamma(t),2}dt.$$

If $\a$ is finite dimensional, i.e. a sum of matrix spaces, this metric is well-known: it is the non positively curved Riemannian metric on the set of positive definite matrices \cite{mostow}.

If $\a$ is of type $II_1$, the trace inner product is not complete, so that $\p$ is not a Hilbert-Riemann manifold and $(\p,d_2)$ is not a complete metric space, see Remark $3.21$ in \cite{condelarotonda}.

The following holds

\begin{itemize}

\item By \cite[Th. 3.1 and Th. 3.2]{andruchowlarotonda} the unique geodesic between $a$ and $b$ for $a,b\in \p$ is given by

$$\gamma_{a,b}(t) = a^{\frac12}(a^{-\frac12}ba^{-\frac12})^ta^{\frac12}$$

and has lenght equal to

$$d_2(a,b):=Lenght(\gamma_{a,b}) = \|ln(a^{-\frac12}ba^{-\frac12})\|_2.$$

\item The action of $\ga$ on $\p$ given by $I_g(a)=gag^*$ is isometric, i.e. $d_2(I_g(a),I_g(b))=d_2(a,b)$, and sends geodesic segments to geodesic segments, i.e. $I_g \circ \gamma_{a,b} =\gamma_{I_g(a),I_g(b)}$ for all $a,b\in \p$ and $g \in \ga$. See the Introduction of \cite{andruchowlarotonda}.

\item Let $a \in \p$ and $\gamma:[0,1] \to \p$ be a geodesic. Then \cite[Theorem 4.4]{condelarotonda}

$$d_2(\gamma_0,\gamma_1)^2 + 4d_2(a,\gamma_{\frac12})^2 \leq 2(d_2(a,\gamma_0)^2 + d_2(a,\gamma_1)^2)$$

so the metric space $(\p,d_2)$ satisfies the semi-parallelogram law (see Definition \ref{semipar} below).

\item By \cite[Cor. 3.4]{andruchowlarotonda} the distance along two geodesics is convex, i.e. $t\mapsto d_2(\gamma_{a_1,b_1}(t),\gamma_{a_2,b_2}(t))$, $[0,1]\to [0,+\infty)$ is convex for $a_1,b_1,a_2,b_2\in \p$. This implies

$$d_2(\gamma_{a_1,b_1}(t),\gamma_{a_2,b_2}(t)) \leq td_2(\gamma_{a_1,b_1}(0),\gamma_{a_2,b_2}(0))+(1-t)d_2(\gamma_{a_1,b_1}(1),\gamma_{a_2,b_2}(1))=$$
$$=td_2(a_1,a_2)+(1-t)d_2(b_1,b_2).$$

If $t_0\in [0,1]$ is fixed, the continuity of $(a,b)\mapsto \gamma_{a,b}(t_0)$, $\p \times \p \to \p$ in the $d_2$ metric follows from the above inequality.

\item Let $\p_{c_1,c_2}:=\{a \in \p: c_11 \leq a \leq c_21\}$ for $0 < c_1 <c_2$. In $\p_{c_1,c_2}$ the linear metric and the rectifiable distance are equivalent \cite[Prop. 3.2]{condelarotonda}, i.e. there are $C>0$, $C' >0$ such that

$$\|a-b\|_2 \leq Cd_2(a,b),\qquad d_2(a,b) \leq C'\|a-b\|_2 \qquad a,b\in \p_{c_1,c_2}$$

Since $\|\cdot\|_2$ is complete on subsets of $\a$ which are closed and bounded in the uniform norm and $\p_{c_1,c_2}$ is closed and bounded in the uniform norm  $(\p_{c_1,c_2},d_2)$ is a complete metric space.

Also, for $a,b\in \p_{c_1,c_2}$

$$d_2(a,b) \leq C'\|a-b\|_2 \leq C'\|a-b\| \leq 2C'c_2$$

so that $\p_{c_1,c_2}$ is bounded in the $d_2$ metric.

\item $\p_{c_1,c_2}$ is geodesically convex: if $a,b\in \p_{c_1,c_2}$ then $\gamma_{a,b}(t) \in \p_{c_1,c_2}$ for every $t \in [0,1]$, see \cite{andcorstoj}.

\end{itemize}

\section{Non-negatively curved metric spaces}

In this subsection we recall some well-known results from metric geometry. A general reference is \cite{burago}. For the convenience of the the reader we include the proof of the Bruhat-Tits fixed point theorem.

\begin{defi}\label{semipar}

A metric space $(X,d)$ satisfies the semi-parallelogram law if for all $x,y \in X$ there is a $z \in X$ such that for all $w\in X$ the following inequality holds

$$d(x,y)^2 + 4d(w,z)^2 \leq 2[d(x,z)^2 +d(y,z)^2].$$

A Bruhat-Tits space is a complete metric space in which the semi-parallelogram law holds.

\end{defi}

\begin{rem}

The point $z$ satisfying this inequality is unique and is called the midpoint between $x$ and $y$ and we denote it by $m(x,y)$. We therefore have a function $m: X\times X \to X$ called the midpoint map.

\end{rem}

\begin{lem} \textsc{Serre's Lemma \cite[Ch. XI, Lemma 3.1]{lang}}

Let $(X,d)$ be a Bruhat-Tits and $S$ a bounded subset of $X$. Then there is a unique closed ball $B_r[y]$ of minimal radius containing $S$.

\end{lem}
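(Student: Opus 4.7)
The plan is to build the minimal enclosing ball by a center-convergence argument, and then use the same semi-parallelogram trick to force uniqueness.

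First I would let $r = \inf\{\rho > 0 : \exists y\in X \text{ with } S\subset B_\rho[y]\}$, which is finite since $S$ is bounded, and pick a sequence $(y_n)\subset X$ together with radii $r_n\searrow r$ such that $S\subset B_{r_n}[y_n]$. The key step is to show $(y_n)$ is Cauchy. For each pair $n,m$ let $z_{n,m}=m(y_n,y_m)$ be the midpoint given by the semi-parallelogram law. For every $s\in S$,
\[
d(y_n,y_m)^2 + 4\,d(s,z_{n,m})^2 \;\leq\; 2\bigl(d(s,y_n)^2 + d(s,y_m)^2\bigr) \;\leq\; 2(r_n^2+r_m^2),
\]
so
\[
d(s,z_{n,m})^2 \;\leq\; \tfrac12(r_n^2+r_m^2) - \tfrac14 d(y_n,y_m)^2.
\]
Taking the supremum over $s\in S$ gives an enclosing ball around $z_{n,m}$, and by the infimum definition of $r$ its radius must be at least $r$. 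Hence
\[
r^2 \;\leq\; \tfrac12(r_n^2+r_m^2) - \tfrac14 d(y_n,y_m)^2,
\qquad\text{i.e.}\qquad
d(y_n,y_m)^2 \;\leq\; 2(r_n^2+r_m^2) - 4r^2 \;\xrightarrow[n,m\to\infty]{}\; 0.
\]
By completeness $y_n\to y$ for some $y\in X$, and for each $s\in S$ one has $d(s,y)=\lim_n d(s,y_n)\leq \lim_n r_n = r$, so $S\subset B_r[y]$.

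For uniqueness, suppose $S\subset B_r[y_1]\cap B_r[y_2]$ and let $z=m(y_1,y_2)$. The semi-parallelogram law applied to any $s\in S$ gives
\[
d(y_1,y_2)^2 + 4\,d(s,z)^2 \;\leq\; 2\bigl(d(s,y_1)^2+d(s,y_2)^2\bigr) \;\leq\; 4r^2,
\]
so $S\subset B_{r'}[z]$ with $r'^2 = r^2-\tfrac14 d(y_1,y_2)^2$. Minimality of $r$ forces $r'\geq r$, hence $d(y_1,y_2)=0$, i.e.\ $y_1=y_2$.

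I don't foresee a real obstacle; the only subtle point is making sure the bound on $d(s,z_{n,m})$ is uniform in $s$ before invoking the infimum $r$, which is what lets $d(y_n,y_m)$ be controlled by $r_n^2+r_m^2-2r^2$ rather than something depending on a single point of $S$. Once that is done, completeness and the same midpoint argument handle existence and uniqueness in parallel.
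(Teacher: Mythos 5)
Your proof is correct, and it is precisely the standard (Serre's) argument that the paper itself omits, deferring instead to Lang's book: a minimizing sequence of centers, the semi-parallelogram law applied \emph{uniformly} over $s\in S$ to obtain the Cauchy estimate $d(y_n,y_m)^2 \leq 2(r_n^2+r_m^2)-4r^2$, completeness for existence of the limit center, and the same midpoint estimate to force uniqueness, so there is nothing to compare against beyond noting that your route is the canonical one. One point worth flagging: you applied the law in the correct form $d(x,y)^2 + 4d(w,z)^2 \leq 2\left[d(x,w)^2 + d(y,w)^2\right]$, which matches the inequality quoted from Conde and Larotonda in Section 1, whereas Definition \ref{semipar} as literally printed has $d(x,z)^2 + d(y,z)^2$ on the right-hand side --- a typo, since with $z$ the midpoint of $x$ and $y$ that version would force $d(w,z)=0$ for every $w$ --- so your reading of the definition is the right one.
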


\begin{defi}

The center $y$ of the closed ball $B_r[y]$ in the previous lemma is called the circumcenter of the bounded set $S$.

\end{defi}

\begin{teo} \textsc{Bruhat-Tits fixed point theorem \cite{bruhattits}}

If $(X,d)$ is a Bruhat-Tits space and $I:G\to Isom(X)$ is an action of a group $G$ on $X$ by isometries which has a bounded orbit, then the circumcenter of each orbit is a fixed point of the action.

\end{teo}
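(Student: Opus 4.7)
The plan is to exploit the uniqueness of the circumcenter guaranteed by Serre's Lemma together with the fact that isometries send closed balls to closed balls of the same radius. Fix an arbitrary orbit $S=Gx_0$ of the action. I would first note that if one orbit is bounded then all orbits are: indeed, for any $y\in X$ and $g,h\in G$, the isometry property gives $d(I_g(y),I_h(y))\le d(I_g(y),I_g(x_0))+d(I_g(x_0),I_h(x_0))+d(I_h(x_0),I_h(y))=2d(y,x_0)+d(I_g(x_0),I_h(x_0))$, so boundedness of $Gx_0$ propagates to $Gy$. Hence $S$ is a bounded subset of the Bruhat--Tits space $X$.

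Next, I would apply Serre's Lemma to $S$ to obtain a unique closed ball $B_r[y]$ of minimal radius containing $S$; by definition $y$ is the circumcenter of $S$. The key observation is then that for any $g\in G$ the map $I_g$ is a bijective isometry of $X$, and therefore
$$I_g(B_r[y])=B_r[I_g(y)].$$
Since $S$ is an orbit, $I_g(S)=S$, and applying $I_g$ to the inclusion $S\subseteq B_r[y]$ yields $S\subseteq B_r[I_g(y)]$. Thus $B_r[I_g(y)]$ is another closed ball of the same (minimal) radius $r$ containing $S$. By the uniqueness statement of Serre's Lemma, we must have $I_g(y)=y$, so $y$ is a fixed point of the action.

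There is no real obstacle once Serre's Lemma is in hand; the whole argument is a uniqueness-plus-equivariance pattern. The only point requiring slight care is the logical structure of the statement: the hypothesis only gives \emph{some} bounded orbit, while the conclusion concerns the circumcenter of \emph{each} orbit, so the preliminary observation that boundedness of one orbit forces boundedness of every orbit is what makes the conclusion meaningful for all orbits simultaneously.
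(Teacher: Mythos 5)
Your proof is correct and follows essentially the same argument as the paper: propagate boundedness to all orbits via the isometric action, then combine the equivariance $I_g(B_r[y])=B_r[I_g(y)]$ with the uniqueness in Serre's Lemma to conclude $I_g(y)=y$. Your explicit triangle-inequality verification that all orbits are bounded is a detail the paper merely asserts, but the approach is identical.
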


\begin{proof}

We denote the action by $g\cdot x$ for $g\in G$ and $x\in X$. Since the action is isometric and there is a bounded orbit all orbits are bounded. For $x\in X$ let $B_r[y]$ be the unique closed ball of minimal radius which contains $G\cdot x$. If $g\in G$ then $G\cdot x= g\cdot (G\cdot x)\subseteq g\cdot B_r[y]=B_r[g\cdot y]$ where the last equality follows since the action is isometric. From the uniqueness of the closed balls of minimal radius containing $G\cdot x$ we conclude that $g\cdot y=y$. Therefore, $g\cdot y=y$ for every $g\in G$ and $y$ is a fixed point of the action.

\end{proof}

\section{Uniformly bounded subgroups}

\begin{defi}

A subset $A \subseteq \p$ is geodesically convex if $\gamma_{a,b}(t) \in A$ for every $a,b \in A$ and $t\in [0,1]$.

\end{defi}

\begin{defi}\label{convhull}

The convex hull of a subset $S \subseteq \p$ is the smallest geodesically convex set containing $S$ and we denote it by $conv(A)$.

An alternative definition is

$$conv(S)= \bigcup_{n \in \mathbb{N}}X_n$$

where $X_1=S$, and $X_{n+1}=\{\gamma_{a,b}(t): a,b \in X_n, t \in [0,1]\}$ inductively for $n \geq 1$.

\end{defi}

\begin{lem}

If $C \subseteq \p_{c_1,c_2}$ is a geodesically convex subset then its closure $\overline{C}$ in $(\p_{c_1,c_2},d_2)$ is geodesically convex.

\end{lem}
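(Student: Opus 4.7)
The plan is to use the sequential characterization of closure combined with the continuity of the geodesic map $(a,b)\mapsto \gamma_{a,b}(t_0)$ in the $d_2$ metric, which is recorded in the fourth bullet of Section 1. Since $(\p_{c_1,c_2},d_2)$ is a complete metric space, its topology is metrizable and closure coincides with sequential closure, so it suffices to produce, for given $a,b\in \overline{C}$ and $t\in[0,1]$, a sequence in $C$ that converges in $d_2$ to $\gamma_{a,b}(t)$.

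First I would pick sequences $(a_n), (b_n) \subseteq C$ with $d_2(a_n,a)\to 0$ and $d_2(b_n,b)\to 0$. Because $C$ is geodesically convex, $\gamma_{a_n,b_n}(t)\in C$ for every $n$. Next I would invoke the convexity inequality
$$d_2(\gamma_{a_n,b_n}(t),\gamma_{a,b}(t)) \leq t\, d_2(a_n,a) + (1-t)\, d_2(b_n,b),$$
which is precisely the consequence of the convexity of distance along geodesics quoted in the fourth bullet of Section 1. This forces $\gamma_{a_n,b_n}(t) \to \gamma_{a,b}(t)$ in $d_2$, and so $\gamma_{a,b}(t)\in \overline{C}$.

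One small point to verify before the argument is coherent is that $\gamma_{a,b}(t)$ actually lies in the ambient space $\p_{c_1,c_2}$, so that $d_2$-convergence within $\p_{c_1,c_2}$ is well-posed. This is immediate from the last bullet of Section 1: $\p_{c_1,c_2}$ is itself geodesically convex, and $a,b\in \overline{C}\subseteq \p_{c_1,c_2}$ (the latter inclusion using that $\p_{c_1,c_2}$ is $d_2$-closed, again by the penultimate bullet of Section 1). Consequently the whole argument takes place inside $\p_{c_1,c_2}$.

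There is no real obstacle: this is a soft argument whose entire content is that geodesic convexity, being characterized by membership in the image of a jointly continuous map applied to pairs of points, is preserved under closure. The statement is essentially a sanity check that will be used later to enlarge convex sets to complete ones for the application of the Bruhat-Tits machinery.
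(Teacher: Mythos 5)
Your proof is correct and follows essentially the same route as the paper's: take sequences in $C$ converging to $a,b$, use geodesic convexity of $C$ to place $\gamma_{a_n,b_n}(t)$ in $C$, and pass to the limit via the $d_2$-continuity of $(a,b)\mapsto\gamma_{a,b}(t)$ (which the paper also derives from the same convexity-of-distance inequality you quote). The only difference is that you spell out the inequality behind the continuity, where the paper simply cites it.
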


\begin{proof}

If $a,b \in \overline{C}$ and $t \in [0,1]$ let $(a_n)_n,(b_n)_n$ be sequences in $C$ such that $a_n \to a$, $b_n\to b$. $\gamma_{a_n,b_n}(t)\in C$ for all $n\in \mathbb{N}$ and since $(a,b)\mapsto \gamma_{a,b}(t)$ is continuous on $\p_{c_1,c_2}\times \p_{c_1,c_2}$, $\gamma_{a_n,b_n}(t) \to \gamma_{a,b}(t)$. We conclude that $\gamma_{a,b}(t)\in \overline{C}$.

\end{proof}

\begin{teo}

Let $H \subseteq \ga$ be a uniformly bounded subgroup, i.e. $\sup_{h\in H}\|h\| := M < \infty$. Then there is an $s \in \p_{M^{-1},M}$ such that $shs^{-1}\in U_{\a}$ for every $h \in H$.

\end{teo}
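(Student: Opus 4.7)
The plan is to apply the Bruhat-Tits fixed point theorem to the isometric action $I: H \to Isom(\p, d_2)$, $I_h(a) = hah^*$, obtain a fixed point $a_0 \in \p_{M^{-2}, M^2}$, and set $s := a_0^{-1/2}$. First I would verify that such a fixed point yields the conclusion: if $h a_0 h^* = a_0$ for every $h \in H$, then inverting gives $h^* a_0^{-1} h = a_0^{-1}$, so $u_h := a_0^{-1/2} h a_0^{1/2}$ satisfies $u_h u_h^* = a_0^{-1/2}(h a_0 h^*) a_0^{-1/2} = 1$ and $u_h^* u_h = a_0^{1/2}(h^* a_0^{-1} h) a_0^{1/2} = 1$. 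Thus $s h s^{-1} = u_h$ is unitary, and continuous functional calculus applied to the bound $M^{-2} \leq a_0 \leq M^2$ produces $s \in \p_{M^{-1}, M}$.

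Next I would build a Bruhat-Tits space on which $H$ acts. Since $\p$ is not $d_2$-complete in the type $II_1$ case, one must carve out a smaller complete, geodesically convex, $H$-invariant subspace. Consider the orbit $S := I_H(1) = \{hh^* : h \in H\}$; the uniform bound gives $M^{-2}\cdot 1 \leq hh^* \leq M^2 \cdot 1$, so $S \subseteq \p_{M^{-2}, M^2}$. Let $X := \overline{conv(S)}$, with closure taken in $(\p_{M^{-2}, M^2}, d_2)$. Because $\p_{M^{-2}, M^2}$ is $d_2$-complete and geodesically convex, $X$ lies inside $\p_{M^{-2}, M^2}$ and is complete; by the preceding lemma $X$ is itself geodesically convex; hence the semi-parallelogram inequality from $\p$, applied with midpoint $\gamma_{x,y}(1/2) \in X$, passes to $X$, making $X$ a Bruhat-Tits space, bounded in $d_2$.

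Then I would check $H$-invariance of $X$. Each $I_h$ permutes $S$ (since $I_h(h_1 h_1^*) = (hh_1)(hh_1)^*$ and left multiplication by $h$ is a bijection of $H$), sends geodesics to geodesics, and is $d_2$-continuous as an isometry; by induction along the filtration $X_n$ in Definition \ref{convhull} one obtains $I_h(conv(S)) \subseteq conv(S)$, and continuity forces $I_h(X) \subseteq X$. Thus $H$ acts on the Bruhat-Tits space $X$ by isometries with bounded orbits, and the Bruhat-Tits theorem supplies a common fixed point $a_0 \in X \subseteq \p_{M^{-2}, M^2}$; combined with the first paragraph this finishes the proof.

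The main delicate point is the choice of invariant subspace: one cannot use $\p_{M^{-2}, M^2}$ itself (it is not $H$-invariant, since $\|hah^*\|$ can reach $M^4$), but must instead pass to $X = \overline{conv(S)}$, exploiting that $H$ preserves both the orbit $S$ and geodesic segments in order to preserve its closed geodesic convex hull, while remaining inside the $d_2$-complete slab $\p_{M^{-2}, M^2}$ needed to guarantee that circumcenters exist.
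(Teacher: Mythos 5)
Your proposal is correct and takes essentially the same route as the paper: act by $I_h(a)=hah^*$ on the closed geodesic convex hull of $\{hh^*:h\in H\}$ inside the complete, $d_2$-bounded slab $\p_{M^{-2},M^2}$, verify it is an invariant Bruhat--Tits space, and take the square root of the circumcenter/fixed point as the unitarizer. If anything, your choice $s=a_0^{-1/2}$ (together with the explicit check $u_h^*u_h=1$ via $h^*a_0^{-1}h=a_0^{-1}$) matches the stated conjugation $shs^{-1}\in U_{\a}$ slightly more cleanly than the paper's $s=a^{1/2}$, which literally yields $s^{-1}Hs\subseteq U_{\a}$.
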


\begin{proof}

Consider the action $I: H \to Isom(\p)$ given by $I_h(a)=hah^*$ for $h\in H$ and $a\in \p$. We denote $h\cdot a:=I_h(a)$. Since $H\cdot 1 = \{hh^*: h\in H\} \subseteq \p_{M^{-2},M^2}$ and $\p_{M^{-2},M^2}$ is geodesically convex $conv(H\cdot 1) \subseteq \p_{M^{-2},M^2}$. Also, since $\p_{M^{-2},M^2}$ is closed in $(\p,d_2)$, $\overline{conv}(H\cdot 1) \subseteq$ $\p_{M^{-2},M^2}$.

We adopt the notation of Definition \ref{convhull}. $X_1 = H \cdot 1$ is invariant for the action and since the action sends geodesics segments to geodesic segments, if $X_n$ is invariant then $X_{n+1}$ is invariant for all $n \geq 1$. We conclude that $conv(H\cdot 1)= \bigcup_{n \in \mathbb{N}}X_n$ is invariant. Since the action is also isometric $\overline{conv}(H\cdot 1)$ is an invariant subset and we can restrict the action to this subset.

Note that $\overline{conv}(H\cdot 1)$ is a geodesically convex subset of $\p$, in $(\p,d_2)$ the semi-parallelogram holds and the midpoint of $a,b\in \p$ is $\gamma_{a,b}(\frac12)$, so this law also holds in $(\overline{conv}(H\cdot 1),d_2)$. Since $\overline{conv}(H\cdot 1)$ is a closed subset of the complete metric space $(\p_{M^{-2},M^2},d_2)$, $(\overline{conv}(H\cdot 1),d_2)$ is a complete metric space. We conclude that $(\overline{conv}(H\cdot 1),d_2)$ is a Bruhat-Tits space.

Since $\p_{M^{-2},M^2}$ is bounded in the $d_2$ metric $\overline{conv}(H\cdot 1)$ is bounded in this metric. Therefore the action has bounded orbits and the Bruhat-Tits fixed point theorem states that the circumcenter $a\in \overline{conv}(H\cdot 1)$ of $H\cdot 1$ satisfies $I_h(a)=hah^*=a$ for all $h\in H$. Then

$$1=a^{-\frac12}aa^{-\frac12}=a^{-\frac12}hah^*a^{-\frac12}=(a^{-\frac12}ha^{\frac12})(a^{\frac12}h^*a^{-\frac12}) $$
$$=(a^{-\frac12}ha^{\frac12})(a^{-\frac12}ha^{\frac12})^* \qquad \textrm{for all $h\in H$}$$

so that $a^{-\frac12}Ha^{\frac12} \subseteq U_{\a}$.

Since  $a\in \p_{M^{-2},M^2}$, then $a^{\frac12} \in \p_{M^{-1},M}$ because the square root is an operator monotone function \cite[Prop. 4.2.8]{kadring}. Taking $s=a^{\frac12}$ we get the unitarizer stated in the theorem.

\end{proof}

\begin{rem}
The unitarizability of a uniformly bounded subgroup $H$ of the group of bounded linear operators acting on a Hilbert space was obtained independently in the 50s by Day, Dixmier and Nakamura, see \cite{nakamura} and the references therein, assuming that $H$ is amenable. In that context the unitarizer $s$ was obtained as the square root of the center of mass of $\{hh^*\}_{h\in H}$. In the present note the unitarizer is the square root of the circumcenter of that same set; we assume however the existence of a finite trace, and in this setting, Vasilescu and Zsido \cite{vasilescuzsido} proved in the 70s the result (without the assumption on amenability) using the Ryll-Nardzewsky fixed point theorem which involves weak topologies. 

\end{rem}

\bigskip

\noindent

\end{document}